\documentclass[reqno]{amsart}
\usepackage{amsthm, amssymb, amsfonts}
\usepackage{lmodern}
\usepackage{color}
\usepackage{hyperref}
\usepackage[T5]{fontenc}
\usepackage{amscd,amssymb}
\usepackage[v2,cmtip]{xy}
\usepackage{mathrsfs}
\theoremstyle{plain}
\newtheorem{thm}{Theorem}[section]
\newtheorem{prop}[thm]{Proposition}
\newtheorem{lem}[thm]{Lemma}

\newtheorem{corl}[thm]{Corollary}
\theoremstyle{definition}
\newtheorem{defn}[thm]{Definition}

\newtheorem{nota}[thm]{Notation}

\theoremstyle{plain}

\theoremstyle{definition}

\begin{document} 
\title[On the generators of the polynomial algebra]
{}
 \author{\DJ\d{\u a}ng V\~o Ph\'uc$^{\dag}$ and Nguy\~\ecircumflex n Sum$^{\dag,1}$}

\footnotetext[1]{Corresponding author.}
\footnotetext[2]{2000 {\it Mathematics Subject Classification}. Primary 55S10; 55S05, 55T15.}
\footnotetext[3]{{\it Keywords and phrases:} Steenrod squares, Peterson hit problem, polynomial algebra.}

\centerline{\textbf{ON THE GENERATORS OF THE POLYNOMIAL ALGEBRA}}
\centerline{\textbf{AS A MODULE OVER THE STEENROD ALGEBRA}}
\bigskip
\centerline{\textbf{SUR  LES G\'EN\'ERATEURS DE L'ALG\`EBRE POLYNOMIALE}}
\centerline{\textbf{COMME MODULE SUR L'ALG\`EBRE DE STENNROD}}
\maketitle

\noindent{\bf Abstract.}
 Let $P_k:= \mathbb F_2[x_1,x_2,\ldots ,x_k]$ be  the polynomial algebra over the prime field of two elements, $\mathbb F_2$, in $k$ variables $x_1, x_2, \ldots , x_k$, each of degree 1.  

We are interested in the {\it  Peterson hit problem} of finding a minimal set of generators for  $P_k$ as a module over the  mod-2 Steenrod algebra, $\mathcal{A}$.  In this paper, we  study the hit problem in degree $(k-1)(2^d-1)$ with $d$ a positive integer. Our result implies the one of Mothebe \cite{mo,mo1}.

\bigskip
\noindent{\bf R\'esum\'e.} Soient $\mathcal A$ l'alg\`ebre de Steenrod mod-2 et $P_k:= \mathbb F_2[x_1,x_2,\ldots ,x_k]$  l'alg\`ebre polynomiale gradu\'ee \`a $k$ g\'en\'erateurs sur le corps \`a deux \'el\'ements $\mathbb F_2$, chaque g\'en\'erateur \'etant de degr\'e 1. 

Nous \'etudions le probl\`eme suivant soulev\'e par F. Peterson: d\'eterminer un syst\`eme minimal de g\'en\'erateurs comme
module sur l'alg\`ebre de Steenrod pour $P_k$, probl\`eme appel\'e {\it hit problem} en anglais.
Dans ce but, nous \'etudions le {\it hit problem} en degr\'e $(k-1)(2^d-1)$ avec $d > 0$. Cette solution implique un r\'esultat de Mothebe ~\cite{mo,mo1}.
\medskip
\medskip
\section{Introduction}\label{s1} 
\setcounter{equation}{0}

Let $P_k$ be the graded polynomial algebra $\mathbb F_2[x_1,x_2,\ldots ,x_k]$, with the degree of each $x_i$
being 1. This algebra arises as the cohomology with coefficients in $\mathbb F_2$ of an elementary abelian 2-group of rank $k$.  Then, $P_k$ is a module over the mod-2 Steenrod algebra, $\mathcal A$.  
The action of $\mathcal A$ on $P_k$ is determined by the elementary properties of the Steenrod squares $Sq^i$ and subject to the Cartan formula (see Steenrod and Epstein~\cite{st}).

An element $g$ in $P_k$ is called {\it hit} if it belongs to  $\mathcal{A}^+P_k$, where $\mathcal{A}^+$ is the augmentation ideal of $\mathcal A$. That means $g$ can be written as a finite sum $g = \sum_{u\geqslant 0}Sq^{2^u}(g_u)$ for suitable polynomials $g_u \in P_k$.  

We are interested in the {\it hit problem}, set up by F. Peterson, of finding a minimal set of generators for the polynomial algebra $P_k$ as a module over the Steenrod algebra. In other words, we want to find a basis of the $\mathbb F_2$-vector space $QP_k := P_k/\mathcal A^+P_k = \mathbb F_2 \otimes_{\mathcal A} P_k$. 

The hit problem was first studied by Peterson~\cite{pe}, Wood~\cite{wo}, Singer~\cite {si1}, 
and Priddy~\cite{pr}, who showed its relation to several classical problems respectively in cobordism theory, modular representation theory, the Adams spectral sequence for the stable homotopy of spheres, and stable homotopy type of classifying spaces of finite groups. 

The vector space $QP_k$ was explicitly calculated by Peterson~\cite{pe} for $k=1, 2,$ by Kameko~\cite{ka} for $k=3$, and recently by the second author ~\cite{su1,su3} for $k=4$. 
From the results of Wood \cite{wo} and Kameko \cite{ka}, the hit problem is reduced to the case of degree $n$ of the form
\begin{equation} \label{ct1.1}n =  s(2^d-1) + 2^dm,
\end{equation}
where $s, d, m$ are non-negative integers and $1 \leqslant s <k$, (see \cite{su3}.) For $s=k-1$ and $m > 0$, the problem was studied by Crabb and Hubbuck~\cite{ch}, Nam~\cite{na}, Repka and Selick~\cite{res} and the second author ~\cite{su1,su3}.  

In the present paper, we  study the hit problem in degree $n$ of the form (\ref{ct1.1}) with $s=k-1$, $m=0$ and $d$ an arbitrary positive integer.

Denote by $(QP_k)_n$ the subspace of $QP_k$ consisting of the classes represented by the homogeneous polynomials of degree $n$ in $P_k$.  
From the result of Carlisle and Wood \cite{cw} on the boundedness conjecture, one can see that for $d$ big enough, the dimension of $(QP_k)_n$ does not depend on $d$; it depends only on $k$. In this paper, we prove the following.

\medskip\noindent
{\bf Main Theorem.} {\it Let $n=(k-1)(2^d-1)$ with $d$ a positive integer and let $p = \min\{k,d\}$, $q = \min\{k,d-1\}$. If $k \geqslant 3$, then
\begin{equation*}\label{ctc}\dim (QP_k)_n \geqslant c(k,d) := \sum_{t=1}^p\binom kt + (k-3)\binom{k}2\sum_{u=1}^{q}\binom ku ,
\end{equation*}
with equality if and only if either $k=3$ or $k=4,\ d\geqslant 5$ or $k =5,\ d \geqslant 6$.
}
\medskip

Note that $c(k,1) = \binom k1=k$. If $d > k$, then $c(k,d) = \big((k-3)\binom{k}2  + 1\big)(2^k-1)$. At the end of Section \ref{s3}, we show that our result implies Mothebe's result in \cite{mo,mo1}.

 In Section \ref{s2}, we recall  the definition of an admissible monomial in $P_k$ and Singer's  criterion on the hit monomials.  Our results will be presented in Section \ref{s3}.

\section{Preliminaries}\label{s2}
\setcounter{equation}{0}

In this section, we recall some needed information from Kameko~\cite{ka} and Singer~\cite{si2}, which will be used in the next section.
\begin{nota} We denote $\mathbb N_k = \{1,2, \ldots , k\}$ and
\begin{align*}
X_{\mathbb J} = X_{\{j_1,j_2,\ldots , j_s\}} =
 \prod_{j\in \mathbb N_k\setminus \mathbb J}x_j , \ \ \mathbb J = \{j_1,j_2,\ldots , j_s\}\subset \mathbb N_k,
\end{align*}
In particular, $X_{\mathbb N_k} =1,\
X_\emptyset = x_1x_2\ldots x_k,$ 
$X_j = x_1\ldots \hat x_j \ldots x_k, \ 1 \leqslant j \leqslant k,$ and $X:=X_k \in P_{k-1}.$

Let $\alpha_i(a)$ denote the $i$-th coefficient  in dyadic expansion of a non-negative integer $a$. That means
$a= \alpha_0(a)2^0+\alpha_1(a)2^1+\alpha_2(a)2^2+ \ldots ,$ for $ \alpha_i(a) =0$ or 1 with $i\geqslant 0$. Set $\alpha(a) = \sum_{i\geqslant 0}\alpha_i(a).$

Let $x=x_1^{a_1}x_2^{a_2}\ldots x_k^{a_k} \in P_k$. Denote $\nu_j(x) = a_j, 1 \leqslant j \leqslant k$.  
Set $\mathbb J_t(x) = \{j \in \mathbb N_k :\alpha_t(\nu_j(x)) =0\},$
for $t\geqslant 0$. Then, we have
$x = \prod_{t\geqslant 0}X_{\mathbb J_t(x)}^{2^t}.$ 
\end{nota}
\begin{defn}
For a monomial  $x$ in $P_k$,  define two sequences associated with $x$ by
\begin{align*} 
\omega(x)=(\omega_1(x),\omega_2(x),\ldots , \omega_i(x), \ldots),\ \
\sigma(x) = (\nu_1(x),\nu_2(x),\ldots ,\nu_k(x)),
\end{align*}
where
$\omega_i(x) = \sum_{1\leqslant j \leqslant k} \alpha_{i-1}(\nu_j(x))= \deg X_{\mathbb J_{i-1}(x)},\ i \geqslant 1.$
The sequence $\omega(x)$ is called  the weight vector of $x$. 

Let $\omega=(\omega_1,\omega_2,\ldots , \omega_i, \ldots)$ be a sequence of non-negative integers.  The sequence $\omega$ is called  the weight vector if $\omega_i = 0$ for $i \gg 0$.
\end{defn}

The sets of the weight vectors and the sigma vectors are given the left lexicographical order. 

  For a  weight vector $\omega$,  we define $\deg \omega = \sum_{i > 0}2^{i-1}\omega_i$.  If  there are $i_0=0, i_1, i_2, \ldots , i_r > 0$ such that $i_1 + i_2 + \ldots + i_r  = m$, $\omega_{i_1+\ldots +i_{s-1} + t} = b_s, 1 \leqslant t \leqslant i_s, 1 \leqslant s \leqslant  r$, and $\omega_i=0$ for all $i > m$, then we write $\omega = (b_1^{(i_1)},b _2^{(i_2)},\ldots , b_r^{(i_r)})$. Denote $b_u^{(1)} = b_u$.  For example, $\omega = (3,3,2,1,1,1,0,\ldots) = (3^{(2)},2,1^{(3)})$.

Denote by   $P_k(\omega)$ the subspace of $P_k$ spanned by monomials $y$ such that
$\deg y = \deg \omega$, $\omega(y) \leqslant \omega$, and by $P_k^-(\omega)$ the subspace of $P_k$ spanned by monomials $y \in P_k(\omega)$  such that $\omega(y) < \omega$. 

\begin{defn}\label{dfn2} Let $\omega$ be a weight vector and $f, g$ two polynomials  of the same degree in $P_k$. 

i) $f \equiv g$ if and only if $f - g \in \mathcal A^+P_k$. If $f \equiv 0$ then $f$ is called hit.

ii) $f \equiv_{\omega} g$ if and only if $f - g \in \mathcal A^+P_k+P_k^-(\omega)$. 
\end{defn}

Obviously, the relations $\equiv$ and $\equiv_{\omega}$ are equivalence ones. Denote by $QP_k(\omega)$ the quotient of $P_k(\omega)$ by the equivalence relation $\equiv_\omega$. Then, we have $QP_k(\omega)= P_k(\omega)/ ((\mathcal A^+P_k\cap P_k(\omega))+P_k^-(\omega))$  and
$(QP_k)_n \cong \bigoplus_{\deg \omega = n}QP_k(\omega)$ (see Walker and Wood \cite{wa1}). 

We note that the weight vector of a monomial is invariant under the permutation of the generators $x_i$, hence $QP_k(\omega)$ has an action of the symmetric group $\Sigma_k$.

For a  polynomial $f \in  P_k(\omega)$, we denote by $[f]_\omega$ the class in $QP_k(\omega)$ represented by $f$. Denote by $|S|$ the cardinal of a set $S$.

\begin{defn}\label{defn3} 
Let $x, y$ be monomials of the same degree in $P_k$. We say that $x <y$ if and only if one of the following holds:  

i) $\omega (x) < \omega(y)$;

ii) $\omega (x) = \omega(y)$ and $\sigma(x) < \sigma(y).$
\end{defn}

\begin{defn}
A monomial $x$ is said to be inadmissible if there exist monomials $y_1,y_2,\ldots, y_m$ such that $y_t<x$ for $t=1,2,\ldots , m$ and $x - \sum_{t=1}^my_t \in \mathcal A^+P_k.$ 

A monomial $x$ is said to be admissible if it is not inadmissible.
\end{defn}

Obviously, the set of the admissible monomials of degree $n$ in $P_k$ is a minimal set of $\mathcal{A}$-generators for $P_k$ in degree $n$. 
Now, we recall a result of Singer \cite{si2} on the hit monomials in $P_k$. 

\begin{defn}\label{spi}  A monomial $z$ in $P_k$   is called a spike if $\nu_j(z)=2^{d_j}-1$ for $d_j$ a non-negative integer and $j=1,2, \ldots , k$. If $z$ is a spike with $d_1>d_2>\ldots >d_{r-1}\geqslant d_r>0$ and $d_j=0$ for $j>r,$ then it is called the minimal spike.
\end{defn}
In \cite{si2}, Singer showed that if $\alpha(n+k) \leqslant k$, then there exists uniquely a minimal spike of degree $n$ in $P_k$.

\begin{lem}\label{bdbs}\

{\rm i)} All the spikes in $P_k$ are admissible and their weight vectors are weakly decreasing. 

{\rm ii)} If a weight vector $\omega$ is weakly decreasing and $\omega_1 \leqslant k$, then there is a spike $z$ in $P_k$ such that $\omega (z) = \omega$. 
\end{lem}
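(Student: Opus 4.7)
The plan is to handle part (ii) by an explicit construction and part (i) by a direct weight-vector computation together with a joint induction on $k$ and $\deg z$ driven by Kameko's squaring operation. For part (ii), given a weakly decreasing $\omega$ with $\omega_1 \leqslant k$, I would define $d_j = |\{i \geqslant 1 : \omega_i \geqslant j\}|$ for $j = 1, \ldots, k$ (the conjugate partition, padded with zeros) and set $z = \prod_{j=1}^k x_j^{2^{d_j}-1}$. Because $2^{d_j}-1$ has binary expansion $1^{(d_j)}$, one computes $\omega_i(z) = |\{j \leqslant k : d_j \geqslant i\}|$; using monotonicity of $\omega$ together with the bound $\omega_i \leqslant k$, this double-count collapses to $\omega_i$, giving $\omega(z) = \omega$. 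The same identity applied to an arbitrary spike $z = \prod x_j^{2^{d_j}-1}$ shows $\omega_i(z) = |\{j : d_j \geqslant i\}|$, which is manifestly non-increasing in $i$, yielding the weakly-decreasing assertion of part (i).

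For admissibility in part (i), I induct jointly on $k$ and $\deg z$. The base cases (such as $k=1$ or $\deg z \leqslant 1$) are immediate, since in $P_1$ each degree contains a unique monomial and a direct computation with $Sq^i(x_1^j)=\binom{j}{i}x_1^{i+j}$ rules out $x_1^{2^d-1} \in \mathcal A^+ P_1$. For the step, suppose to the contrary that $z \equiv \sum_t y_t$ with each $y_t<z$. If some $d_j = 0$ --- say $d_k = 0$ --- I apply the $\mathcal A$-algebra projection $\pi : P_k \to P_{k-1}$ sending $x_k \mapsto 0$; this fixes $z$, kills every $y_t$ divisible by $x_k$, and preserves the comparison $\pi(y_t) < z$ inside $P_{k-1}$ (both $\omega$ and the truncated $\sigma$ agree in $P_{k-1}$ and $P_k$ for monomials not involving $x_k$). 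This contradicts the inductive hypothesis on $k$.

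If instead all $d_j \geqslant 1$, I factor $z = (x_1\cdots x_k)(z')^2$ with $z' = \prod x_j^{2^{d_j-1}-1}$ again a spike, now of strictly smaller degree. Kameko's squaring-down operator $\widetilde{Sq^0}_* : (QP_k)_{2\deg z' + k} \to (QP_k)_{\deg z'}$, sending $[\prod x_j^{a_j}]$ to $[\prod x_j^{(a_j-1)/2}]$ when all $a_j$ are odd and to $0$ otherwise, is well-defined on $QP_k$ and carries $[z] \mapsto [z']$. Using that odd exponents $a_j$ decompose as $a_j = 2a_j'+1$, one checks that $\omega(y) = (k, \omega(y'))$ whenever $y$ has only odd exponents, and that $b \mapsto (b-1)/2$ on odd integers is strictly monotone; hence $y_t < z$ forces $y_t' < z'$ (when it does not simply produce $0$). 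Applying $\widetilde{Sq^0}_*$ to the relation $z \equiv \sum y_t$ then yields $z' \equiv \sum y_t'$ with all $y_t' < z'$, contradicting the admissibility of $z'$ from the degree-induction.

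The main obstacle is the descent under Kameko's operation: invoking its well-definedness on $QP_k$ (a theorem of Kameko in~\cite{ka}) and verifying that the strict ordering of Definition~\ref{defn3} persists after halving odd exponents. The projection argument in the $d_j = 0$ case, by contrast, is a straightforward unpacking of the definitions of $\omega$ and $\sigma$, and the construction for part (ii) is essentially tautological once the conjugate partition is produced.
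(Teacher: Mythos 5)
Your proof is correct, but it takes a genuinely different and considerably heavier route than the one the paper has in mind (the paper omits the proof, calling it ``elementary''). Your part (ii) via the conjugate partition and the identity $\omega_i(z)=|\{j: d_j\geqslant i\}|$ is exactly the standard argument, and the same identity does give the weak decrease in part (i). For admissibility, however, the intended elementary argument is a single Cartan-formula computation: if a spike $x_j^{2^{e}-1}$ is to arise as a term of $Sq^{i_j}(x_j^{a_j})=\binom{a_j}{i_j}x_j^{a_j+i_j}$ with $a_j+i_j=2^{e}-1$, then by Lucas's theorem $\binom{a_j}{i_j}$ is odd only when the binary digits of $i_j$ are dominated by those of $a_j=2^{e}-1-i_j$, i.e.\ by the digit-complement of $i_j$, forcing $i_j=0$; hence a spike is never a term of $Sq^{i}(g)$ for $i>0$, so in any relation $z-\sum_t y_t\in\mathcal A^+P_k$ the spike $z$ must equal some $y_t$, contradicting $y_t<z$. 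Your alternative --- double induction on $k$ and degree, projecting $x_j\mapsto 0$ when some $d_j=0$ and descending via Kameko's $\widetilde{Sq}^0_*$ when all $d_j\geqslant 1$ --- does go through: the order of Definition \ref{defn3} is preserved under both reductions exactly as you check, and there is no circularity since the well-definedness of $\widetilde{Sq}^0_*$ on $QP_k$ does not presuppose admissibility of spikes. But that well-definedness is itself a nontrivial theorem of Kameko, so your argument proves an ``elementary'' lemma by appeal to a much deeper one; the direct binomial-coefficient argument is shorter, self-contained, and is what justifies the paper's remark. (Two small points to tidy in your write-up: the reduction ``say $d_k=0$'' requires relabelling the remaining variables, which is harmless because the truncated $\sigma$-comparison is unaffected when the deleted exponent is $0$ in both monomials; and in the Kameko step you should note that if every $y_t$ has an even exponent the conclusion is that $z'$ is hit, which is still a contradiction with the inductive hypothesis under the convention that hit monomials are inadmissible.)
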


The proof of the this lemma is elementary. The following is a criterion for the hit monomials in $P_k$.

\begin{thm}[See Singer~\cite{si2}]\label{dlsig} Suppose $x \in P_k$ is a monomial of degree $n$, where $\alpha(n + k) \leqslant k$. Let $z$ be the minimal spike of degree $n$. If $\omega(x) < \omega(z)$, then $x$ is hit.
\end{thm}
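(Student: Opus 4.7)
The strategy is to prove the contrapositive by downward induction on $\omega(x)$ in the lex order, with an outer induction on the degree $n$. Concretely, I aim to show that every monomial $x \in P_k$ of degree $n$ with $\omega(x) < \omega(z)$ satisfies $x \equiv_{\omega(x)} \sum_\beta x_\beta$ for monomials $x_\beta$ of the same degree with $\omega(x_\beta) > \omega(x)$; iterating finitely many times forces $x \equiv 0$, since on monomials of fixed degree $n$ there are only finitely many weight vectors.

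The setup begins with an explicit description of $\omega(z)$. Writing $n + r = 2^{d_1} + \cdots + 2^{d_r}$ with $d_1 > \cdots > d_r > 0$ and $r = \alpha(n+r) \leq k$ (this is where the hypothesis $\alpha(n+k) \leq k$ enters), the minimal spike is $z = x_1^{2^{d_1}-1} \cdots x_r^{2^{d_r}-1}$, with $\omega_i(z) = |\{j : d_j \geq i\}|$. By Lemma \ref{bdbs}, $\omega(z)$ is weakly decreasing, and among weakly decreasing weight vectors of degree $n$ whose first entry is at most $k$ it is the largest in the lex order. Now, given $x$ with $\omega(x) < \omega(z)$, let $i$ be the smallest index with $\omega_i(x) < \omega_i(z)$; using the canonical decomposition $x = \prod_{t \geq 0} X_{\mathbb J_t(x)}^{2^t}$, factor $x = u \cdot v^{2^{i-1}}$ where $u$ encodes the first $i-1$ dyadic layers, which already agree with those of $z$.

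The core step is to transport weight from layers above $i$ down to layer $i$ using the Cartan formula together with the basic identity $Sq^{2^t}(y^{2^t}) = (Sq^1 y)^{2^t}$. Concretely, I would choose an auxiliary monomial $m$ such that $Sq^{2^{i-1}}(m)$, expanded via Cartan, contains $x$ as a summand along with extra terms $x_\beta$ whose weight vectors are strictly larger than $\omega(x)$ in lex order, the heuristic being that squaring a single factor $x_j$ at layer $i$ moves weight from some layer $i+s$ into layer $i$. Since $Sq^{2^{i-1}}(m) \in \mathcal A^+ P_k$, this yields $x \equiv_{\omega(x)} \sum_\beta x_\beta$, and the inductive step closes. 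An outer induction on $n$ handles any factors of smaller total degree that appear as byproducts of the expansion.

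The main obstacle is constructing the auxiliary monomial $m$: the Steenrod-square manipulation must simultaneously leave the lower layers $\omega_1, \ldots, \omega_{i-1}$ intact and produce only strictly larger-weight terms at layer $i$ and beyond, without leaving unwanted residual monomials of weight exactly $\omega(x)$ other than $x$ itself. The minimality of $z$ is essential here, since the weakly-decreasing profile of $\omega(z)$ guarantees that the weight deficit at layer $i$ can be compensated by a single $Sq^{2^{i-1}}$-shift; if instead some other spike of degree $n$ realized $\omega(x)$, the Cartan expansion could fail to close. This matches the line of argument in Singer \cite{si2}.
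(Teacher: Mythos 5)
The paper does not prove this theorem: it is imported from Singer \cite{si2} as a known criterion, so there is no internal argument to compare yours with, and your sketch has to stand on its own. It does not, for two reasons. The decisive one is that the termination of your induction rests on a false claim, namely that $\omega(z)$ is lex-maximal among weakly decreasing weight vectors of degree $n$ with first entry at most $k$. The minimal spike is minimal, not maximal, in this respect, and other spikes of the same degree can have strictly larger weight vectors. The paper's own setting supplies a counterexample: for $k=5$, $n=4(2^d-1)$, $d\geqslant 2$, the minimal spike is $z=x_1^{2^{d+1}-1}x_2^{2^{d}-1}x_3^{2^{d-1}-1}x_4^{2^{d-1}-1}$ with $\omega(z)=\bar\omega_{(5,d)}=(4^{(d-1)},2,1)$, whereas $(x_1x_2x_3x_4)^{2^d-1}$ is a spike of the same degree with weight vector $\omega_{(5,d)}=(4^{(d)})>\omega(z)$. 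Since spikes are admissible (Lemma \ref{bdbs}), a monomial of degree $n$ whose weight vector is $\geqslant\omega(z)$ need not be hit, so repeatedly rewriting $x$ as a sum of strictly larger-weight monomials cannot by itself force $x\equiv 0$: the ascent may terminate on non-hit monomials. To salvage the scheme you would have to show the rewriting never produces a monomial of weight $\geqslant\omega(z)$, and nothing in the sketch addresses this. (There is also a bookkeeping issue with $\equiv_{\omega(x)}$: that relation discards terms of weight strictly below $\omega(x)$, which must themselves be proved hit.)

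The second gap is that the entire analytic content of the theorem is concentrated in the step you leave open: constructing the auxiliary monomial $m$ with $Sq^{2^{i-1}}(m)=x+(\text{higher-weight terms})$ while preserving the lower dyadic layers of $x$ and producing no stray terms of weight exactly $\omega(x)$. You yourself flag this as ``the main obstacle'' and offer only a heuristic, so the proposal is a plan for a proof rather than a proof. For orientation, Singer's argument does not ascend the weight order in this way; it works by induction on the degree, combining Wood's theorem with the $\chi$-trick $u\,Sq^i(v)\equiv \chi(Sq^i)(u)\,v \pmod{\mathcal A^+P_k}$ applied to the factorization $x=X_{\mathbb J_0(x)}\,y^2$, the minimal spike entering through an arithmetic comparison of the first entries of the weight vectors. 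A minor further slip: for the minimal spike the last two exponents $d_{r-1},d_r$ may coincide, so $n+r=\sum_j 2^{d_j}$ is in general not the dyadic expansion of $n+r$, and $\omega(z)$ cannot be read off from $\alpha(n+r)$ quite as directly as you state.
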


The following theorem will be used in the next section.
\begin{thm}[See \cite{su1,su3}]\label{dl1} Let $n =\sum_{i=1}^{k-1}(2^{d_i}-1)$ 
with $d_i$ positive integers such that $d_1 > d_2 > \ldots >d_{k-2} \geqslant d_{k-1},$ and let $m = \sum_{i=1}^{ k-2}(2^{d_i-d_{k-1}}-1)$. If $d_{k-1} \geqslant k-1 \geqslant 3$, then
$$\dim (QP_k)_n = (2^k-1)\dim (QP_{k-1})_m.$$
\end{thm}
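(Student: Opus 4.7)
My strategy is to construct an explicit isomorphism
\[ \Phi\colon \bigoplus_{\emptyset \neq J \subseteq \mathbb{N}_k} (QP_{k-1})_m \longrightarrow (QP_k)_n. \]
The starting point is to pin down the minimal spike of degree $n$ in $P_k$: since $d_1 > d_2 > \cdots > d_{k-2} \geqslant d_{k-1}$, this spike is $z_0 = \prod_{i=1}^{k-1} x_i^{2^{d_i}-1}$, and its weight vector $\omega_0$ begins with the block $(k-1)^{(d_{k-1})}$ followed by a weakly decreasing tail. By Singer's criterion (Theorem~\ref{dlsig}), every admissible monomial $x$ of degree $n$ satisfies $\omega(x) \geqslant \omega_0$, and combined with the upper bound $\omega_i(x) \leqslant k$, the first $d_{k-1}$ entries of $\omega(x)$ necessarily lie in $\{k-1, k\}$. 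This binary combinatorial parameter is what I expect to encode the subset $J$.

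The key numerical identity, checked directly, is
\[ n + (k-1) = \sum_{i=1}^{k-1} 2^{d_i} = 2^{d_{k-1}}\Bigl(1 + \sum_{i=1}^{k-2} 2^{d_i - d_{k-1}}\Bigr) = 2^{d_{k-1}}(m+k-1), \]
equivalently $n = 2^{d_{k-1}} m + (k-1)(2^{d_{k-1}} - 1)$. This is the degree-matching condition that permits a lifting $\phi_J\colon (P_{k-1})_m \to (P_k)_n$ of the form $y \mapsto X_J \cdot \widetilde{y}^{\,2^{d_{k-1}}} \cdot (\text{spike correction})$, where $\widetilde{y} \in P_k$ is a chosen lift of $y$ and the spike correction is a low-weight monomial ensuring the total degree is $n$. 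Well-definedness of the assembled map $\Phi$ on $QP$-classes follows from the Cartan formula, since the Frobenius power $\widetilde{y}^{\,2^{d_{k-1}}}$ pulls hit elements back to hit elements.

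Injectivity of $\Phi$ should follow by separating images through their sigma-vectors: the subset $J$ is recorded by which variables carry odd exponents in the leading spike-like factor, and the remaining data recovers $[y]$. Surjectivity is the main obstacle. My plan is to proceed by induction on $d_{k-1}$, reducing from degree $n$ to the analogous degree with $d_{k-1}$ replaced by $d_{k-1}-1$ via a squaring-type isomorphism made possible by the constrained initial block of the relevant weight vectors. The hypothesis $d_{k-1} \geqslant k-1 \geqslant 3$ is precisely what lets this induction run: it guarantees that Singer's weight-vector constraints cut out, at every intermediate stage, exactly a $(2^k-1)$-fold copy of the lower-variable piece, with no spurious admissible classes outside the image of $\Phi$. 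The hardest technical point is this last reduction, showing that every admissible monomial of degree $n$ can indeed be rewritten, modulo $\mathcal{A}^+ P_k$, as a linear combination of the $\phi_J(y)$'s; this is a detailed Adem-relation analysis that is the real content of the proofs in \cite{su1, su3}.
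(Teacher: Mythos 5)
You should first note that the paper does not prove Theorem \ref{dl1}: it is quoted from \cite{su1,su3} (with a corrected hypothesis), so the only fair comparison is with the proofs in those references. Your proposal does reproduce their architecture correctly --- a $(2^k-1)$-fold decomposition of $(QP_k)_n$ indexed by the nonempty subsets of $\mathbb N_k$ (equivalently by $\mathcal N_k$), realized by lifting maps of $\phi_{(i;I)}$-type, with the degree identity $n=2^{d_{k-1}}m+(k-1)(2^{d_{k-1}}-1)$ and the minimal spike $\prod_{i}x_i^{2^{d_i}-1}$ playing exactly the roles you assign them. But as a proof it has genuine gaps at each of the three places where work is required. (i) Well-definedness: multiplication by $X_J$ does not carry hit elements to hit elements (e.g.\ $x_1\cdot Sq^1(x_1)=x_1^3$ is a spike), so ``the Frobenius power pulls hit elements back to hit elements'' does not settle that $\Phi$ descends to $QP$-classes; one needs a $\chi$-trick/Kameko-type argument, and your unspecified ``spike correction'' is precisely where the compatibility conditions of Definition \ref{dfn1} do real work. (ii) Injectivity: sigma vectors are not invariants of classes in $QP_k$ --- a monomial can be congruent modulo $\mathcal A^+P_k$ to a sum of monomials with different $\sigma$ --- so ``separating images through their sigma-vectors'' proves nothing; the actual argument applies the $\mathcal A$-module retractions $p_{(j;J)}$ with an induction on $\ell(J)$, exactly as in Lemma \ref{bddl} and Propositions \ref{mdcm1}--\ref{mdcm2}. (iii) Surjectivity, which you yourself identify as ``the real content of the proofs in \cite{su1,su3},'' is not attempted at all.

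A smaller but real error: the left-lexicographic inequality $\omega(x)\geqslant\omega(z_0)$ alone does not force the first $d_{k-1}$ entries of $\omega(x)$ into $\{k-1,k\}$; for instance $(k,k-3,\dots)$ is lexicographically larger than $((k-1)^{(d_{k-1})},\dots)$. That constraint requires in addition the relation $\sum_{i>0}2^{i-1}\omega_i(x)=n$ and the structure of weight vectors of monomials, so it must be argued, not read off from Singer's criterion. In short: the skeleton is right and matches the cited proofs, but none of the load-bearing steps is carried out.
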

Note that we correct Theorem 3 in \cite{su1} by replacing the condition  $d_{k-1} \geqslant k-1 \geqslant 1$ with $d_{k-1} \geqslant k-1 \geqslant 3$.

\section{Proof of Main Theorem}\label{s3}

Denote
$\mathcal N_k =\big\{(i;I) ; I=(i_1,i_2,\ldots,i_r),1 \leqslant  i < i_1 <  \ldots < i_r\leqslant  k,\ 0\leqslant r <k\big\}.$

\begin{defn} Let $(i;I) \in \mathcal N_k$, let $r = \ell(I)$ be the length of $I$, and let $u$
be an integer with $1 \leqslant  u \leqslant r$. A monomial $x \in P_{k-1}$ is said to be $u$-compatible with $(i;I)$ if all of the following hold:

\smallskip
i) $\nu_{i_1-1}(x)= \nu_{i_2-1}(x)= \ldots = \nu_{i_{(u-1)}-1}(x)=2^{r} - 1$,

ii) $\nu_{i_u-1}(x) > 2^{r} - 1$,

iii) $\alpha_{r-t}(\nu_{i_u-1}(x)) = 1,\ \forall t,\ 1 \leqslant t \leqslant  u$,

iv) $\alpha_{r-t}(\nu_{i_t-1} (x)) = 1,\ \forall t,\ u < t \leqslant r$.
\end{defn}

Clearly, a monomial $x$ can be $u$-compatible with a given $(i;I) \in \mathcal N_k $ for at most one value of $u$. By convention, $x$ is $1$-compatible with $(i;\emptyset)$.

For $1 \leqslant i \leqslant k$, define the homomorphism $f_i: P_{k-1} \to P_k$ of algebras by substituting
$$f_i(x_j) = \begin{cases} x_j, &\text{ if } 1 \leqslant j <i,\\
x_{j+1}, &\text{ if } i \leqslant j <k.
\end{cases}$$

\begin{defn}\label{dfn1} Let $(i;I) \in \mathcal N_k$, $x_{(I,u)} = x_{i_u}^{2^{r-1}+\ldots + 2^{r-u}}\prod_{u< t \leqslant r}x_{i_t}^{2^{r-t}}$ for $r = \ell(I)>0$, $x_{(\emptyset,1)} = 1$.  For a monomial $x$ in $P_{k-1}$, 
we define the monomial  $\phi_{(i;I)}(x)$ in $P_k$ by setting
$$ \phi_{(i;I)}(x) = \begin{cases} (x_i^{2^r-1}f_i(x))/x_{(I,u)}, &\text{if there exists $u$ such that}\\ &\text{$x$ is $u$-compatible with $(i, I)$,}\\
0, &\text{otherwise.}
\end{cases}$$

Then we have an $\mathbb F_2$-linear map $\phi_{(i;I)}:P_{k-1}\to P_k$.
In particular, $\phi_{(i;\emptyset)} = f_i$.
\end{defn}

For a positive integer $b$, denote $\omega_{(k,b)} =((k-1)^{(b)})$ and $\bar \omega_{(k,b)}= ((k-1)^{(b-1)},k-3,1)$. 

\begin{lem}[See \cite{su3}]\label{hq0} Let $b$ be a positive integer and let $j_0, j_1, \ldots , j_{b-1} \in \mathbb N_k$. We set $i = \min\{j_0,\ldots , j_{b-1}\}$, $I = (i_1, \ldots, i_r)$ with $\{i_1, \ldots, i_r\} = \{j_0,\ldots , j_{b-1}\}\setminus \{i\}$. Then, we have 
$$\prod_{0 \leqslant t <b}X_{j_t}^{2^t} \equiv_{\omega_{(k,b)}} \phi_{(i;I)}(X^{2^b-1}).$$
\end{lem}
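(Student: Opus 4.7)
The plan is to combine a direct bit-by-bit verification for one canonical ordering of $(j_0,\ldots,j_{b-1})$ with an adjacent-swap identity allowing free reordering modulo $\mathcal{A}^+P_k+P_k^-(\omega_{(k,b)})$.

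\textbf{Step 1 (canonical identity).} For the canonical ordering $(j_0,\ldots,j_{b-1})=(i_r,i_{r-1},\ldots,i_1,i,i,\ldots,i)$ with $i$ at positions $r,r+1,\ldots,b-1$, the product equals $\phi_{(i;I)}(X^{2^b-1})$ on the nose. Indeed, the exponent of $x_m$ in $\prod_{0\leqslant t<b}X_{j_t}^{2^t}$ is $\sum_{t:j_t\neq m}2^t$, which for this ordering evaluates to $2^r-1$ when $m=i$, to $2^b-1-2^{r-s}$ when $m=i_s$, and to $2^b-1$ when $m\notin\{i\}\cup I$. These exponents match precisely those read off from the formula $\phi_{(i;I)}(x)=x_i^{2^r-1}f_i(x)/x_{(I,u)}$ of Definition~\ref{dfn1}.

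\textbf{Step 2 (swap identity).} Any ordering $(j_0,\ldots,j_{b-1})$ is $\equiv_{\omega_{(k,b)}}$-equivalent to the canonical one; since any permutation is a product of adjacent transpositions, it suffices to show that swapping positions $s,s+1$ in the sequence changes the product only modulo $\mathcal{A}^+P_k+P_k^-(\omega_{(k,b)})$. Let $A,A'$ denote the products before and after this swap; they differ only in the exponents of $x_{j_s}$ and $x_{j_{s+1}}$, which interchange a bit at position $s$ with one at position $s+1$. Let $N=\gcd(A,A')$. Computing $Sq^{2^s}(N)$ via Cartan's formula, the two single-variable summands $N\cdot x_{j_s}^{2^s}$ and $N\cdot x_{j_{s+1}}^{2^s}$ recover $A$ and $A'$ respectively, because in $N$ both $\nu_{j_s}$ and $\nu_{j_{s+1}}$ carry a $1$ at bit $s$ and a $0$ at bit $s+1$, so adding $2^s$ promotes the bit from position $s$ to $s+1$. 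Every other summand, whether a single-variable addition to some $\nu_m$ with $m\neq j_s,j_{s+1}$ (for which both bits $s,s+1$ of $\nu_m(N)$ are $1$, so adding $2^s$ produces a carry cascading past bit $s+1$) or a multi-variable distribution of $2^s$ across several factors, strictly decreases the weight coordinate $\omega_{s+2}$ (or an earlier one), landing the error in $P_k^-(\omega_{(k,b)})$.

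\textbf{Main obstacle.} The technical heart of the argument is verifying that every non-swap summand produced by Cartan's expansion of $Sq^{2^s}$ has weight vector strictly less than $\omega_{(k,b)}$ in left-lex order. The single-variable case is handled by the bit-cascade analysis sketched above; the multi-variable distributions require a similar case-by-case check via Lucas's theorem on the coefficients $\binom{\nu_m(N)}{a_m}\pmod 2$ and bitwise bookkeeping of the resulting exponent carries. Although the principle in each case is the same --- the added $2^s$ collides with an existing $2^s$-bit in $\nu_m(N)$ and triggers a strict drop at coordinate $\omega_{s+2}$ --- the combinatorial case analysis over all compositions of $2^s$ as $a_1+\cdots+a_k$ is the delicate part.
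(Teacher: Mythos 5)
There is a genuine gap in Step 2, and it is structural rather than cosmetic. Adjacent transpositions of the sequence $(j_0,\ldots,j_{b-1})$ preserve its \emph{multiset}, but the lemma identifies all sequences with the same underlying \emph{set}: the canonical sequence $(i_r,\ldots,i_1,i,\ldots,i)$ contains each non-minimal index exactly once and pads with the minimum, so whenever some $i_s$ is repeated among the $j_t$'s your moves can never reach it. Concretely, take $k=3$, $b=3$, $(j_0,j_1,j_2)=(1,2,2)$, so $i=1$, $I=(2)$. The product is $X_1X_2^2X_2^4=x_1^6x_2x_3^7$, while $\phi_{(1;(2))}(X^7)=x_1x_2^6x_3^7$ corresponds to the sequence $(2,1,1)$; the multisets $\{1,2,2\}$ and $\{1,1,2\}$ are different, so no chain of swaps connects them. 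The identification really does require a further relation of a different shape, e.g.\ $Sq^1(x_1^3x_2^3x_3^7)=x_1^4x_2^3x_3^7+x_1^3x_2^4x_3^7+x_1^3x_2^3x_3^8$, where the carry cascades through \emph{two} consecutive $1$-bits and simultaneously changes two slots of the sequence (turning $(1,1,2)$ into $(2,2,1)$); designing and controlling these multiset-collapsing moves is the actual content of the lemma, and your proposal omits it entirely. (For what it is worth, the paper itself gives no proof here --- it cites \cite{su3}, where the argument is an induction on $b$ --- so there is no in-paper proof to compare against.)

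The parts you did write are essentially sound. Step 1 checks out: for the canonical ordering the exponents are $2^r-1$, $2^b-1-2^{r-s}$ and $2^b-1$ as you say, matching Definition \ref{dfn1} with $u=1$. The swap identity is also correct, and the ``delicate'' multi-variable case you flag is in fact the easy one: if a Cartan term distributes $2^s$ over at least two variables, let $t_0<s$ be the lowest bit occurring in any $a_m$; Lucas forces $\alpha_{t_0}(\nu_m(N))=1$ for every $m$ with $\alpha_{t_0}(a_m)=1$, that bit is killed with no incoming carry, and since $j_{t_0}$ is the unique variable already missing bit $t_0$ and cannot acquire it, one gets $\omega_{t_0+1}(y)\leqslant k-2$ while all earlier coordinates equal $k-1$. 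So the error terms do land in $P_k^-(\omega_{(k,b)})$. The missing piece is solely the passage from an arbitrary sequence to one with the canonical multiset.
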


\begin{defn}
For any $(i;I) \in \mathcal N_k$, we define the homomorphism $p_{(i;I)}: P_k \to P_{k-1}$ of algebras by substituting
$$p_{(i;I)}(x_j) =\begin{cases} x_j, &\text{ if } 1 \leqslant j < i,\\
\sum_{s\in I}x_{s-1}, &\text{ if }  j = i,\\  
x_{j-1},&\text{ if } i< j \leqslant k.
\end{cases}$$
Then, $p_{(i;I)}$ is a homomorphism of $\mathcal A$-modules.  In particular, for $I =\emptyset$,  $p_{(i;\emptyset)}(x_i)= 0$  and $p_{(i;I)}(f_i(y)) = y$ for any $y \in P_{k-1}$. 
\end{defn}

\begin{lem}\label{bdm} If $x$ is a monomial in $P_k$, then $p_{(i;I)}(x) \in P_{k-1}(\omega(x))$. 
\end{lem}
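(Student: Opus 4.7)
The plan is to expand $p_{(i;I)}(x)$ explicitly and compare weight vectors bit-by-bit. Writing $x=\prod_{j=1}^{k}x_j^{a_j}$ with $a_j=\nu_j(x)$, the fact that $p_{(i;I)}$ is an algebra homomorphism yields
$$p_{(i;I)}(x) \;=\; \Big(\prod_{j<i}x_j^{a_j}\Big)\Big(\sum_{s\in I}x_{s-1}\Big)^{a_i}\Big(\prod_{j>i}x_{j-1}^{a_j}\Big).$$
Over $\mathbb{F}_2$, Lucas's theorem expands the middle factor as a sum of monomials $\prod_{s\in I}x_{s-1}^{b_s}$ indexed by tuples $(b_s)_{s\in I}$ of non-negative integers with $\sum_{s\in I}b_s=a_i$ and pairwise disjoint binary supports, equivalently $\alpha_l(a_i)=\sum_{s\in I}\alpha_l(b_s)$ for every $l\geqslant 0$. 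Since the indices $s-1$ for $s\in I$ are distinct and lie in $\{i,\ldots,k-1\}$, each such tuple contributes a single monomial $y$ to $p_{(i;I)}(x)$ with $\nu_j(y)=a_j$ for $j<i$, $\nu_{j-1}(y)=a_j$ for $j>i$ with $j\notin I$, and $\nu_{s-1}(y)=a_s+b_s$ for $s\in I$.

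Using the disjointness identity above, one computes
$$\omega_t(y)-\omega_t(x) \;=\; \sum_{s\in I}\bigl(\alpha_{t-1}(a_s+b_s)-\alpha_{t-1}(a_s)-\alpha_{t-1}(b_s)\bigr) \;=\; \sum_{s\in I}\bigl(c^{(s)}_{t-1}-2c^{(s)}_{t}\bigr),$$
where $c^{(s)}_l\in\{0,1\}$ is the carry into bit $l$ in the base-$2$ addition of $a_s$ and $b_s$, and $c^{(s)}_0=0$. I would then argue by induction on $t$ that if $\omega_l(y)=\omega_l(x)$ for every $l<t_0$, then $c^{(s)}_l=0$ for every $s\in I$ and every $l\leqslant t_0-1$: at each inductive step the standing equality forces the non-negative integer $\sum_{s\in I}c^{(s)}_l$ to vanish. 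Substituting at $t=t_0$ gives $\omega_{t_0}(y)-\omega_{t_0}(x)=-2\sum_{s\in I}c^{(s)}_{t_0}\leqslant 0$, and strict equality at $t_0$ would contradict the choice of $t_0$.

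Hence either $\omega(y)=\omega(x)$, or $\omega(y)<\omega(x)$ in the left lexicographic order, so $\omega(y)\leqslant\omega(x)$. Since $p_{(i;I)}$ preserves degree, $\deg y=\deg x=\deg\omega(x)$, so every monomial $y$ in the support of $p_{(i;I)}(x)$ lies in $P_{k-1}(\omega(x))$, and therefore so does $p_{(i;I)}(x)$. The only delicate step is the carry bookkeeping; once the identity $\alpha_l(a+b)-\alpha_l(a)-\alpha_l(b)=c_l-2c_{l+1}$ is on the table, it reduces to a short induction.
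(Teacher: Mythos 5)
Your proof is correct and follows essentially the same route as the paper's: expand $\bigl(\sum_{s\in I}x_{s-1}\bigr)^{\nu_i(x)}$ into monomials that redistribute the binary digits of $\nu_i(x)$ among the $x_{s-1}$, observe that without carries the weight vector is preserved, and that the first carry strictly decreases the weight vector in the left lexicographic order. The paper locates the first carry position directly from the dyadic expansion of $\nu_i(x)$, whereas you package the same observation in the identity $\alpha_l(a+b)-\alpha_l(a)-\alpha_l(b)=c_l-2c_{l+1}$; this is only a stylistic difference.
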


\begin{proof} Set $y = p_{(i;I)}\left(x/x_i^{\nu_i(x)}\right)$. Then, $y$ is a monomial in $P_{k-1}$. If $\nu_i(x) = 0$, then $y = p_{(i;I)}(x)$ and $\omega(y) = \omega(x).$ Suppose $\nu_i(x) >0$ and $\nu_i(x) = 2^{t_1} + \ldots + 2^{t_c}$, where $0 \leqslant t_1 <  \ldots < t_c,\ c\geqslant 1$. 

If $I = \emptyset$, then $p_{(i;I)}(x) = 0$. If $I \ne \emptyset$, then $p_{(i;I)}(x)$ is a sum of monomials of the form 
$\bar y := \big(\prod_{u=1}^cx_{s_u-1}^{2^{t_u}}\big)y$, where $s_u \in I$, $1 \leqslant u \leqslant c$. If $\alpha_{t_u}(\nu_{s_u-1}(y)) =0$ for all $u$, then $\omega(\bar y) = \omega(x)$. Suppose there is an index $u$ such that $\alpha_{t_u}(\nu_{s_u-1}(y)) =1$. Let $u_0$ be the smallest index such that $\alpha_{t_{u_0}}(\nu_{s_{u_0}-1}(y)) =1$. Then, we have
$$ \omega_i(\bar y) =  \begin{cases}\omega _i(x), &\text{if }  i \leqslant t_{u_0},\\ \omega _i(x)-2, &\text{if } i  = t_{u_0}+1.\end{cases}$$
Hence, $\omega (\bar y) < \omega(x)$ and $\bar y \in P_{k-1}(\omega(x))$. 
The lemma is proved.
\end{proof}

Lemma \ref{bdm} implies that if $\omega$ is a weight vector and $x \in P_k(\omega)$, then $p_{(i;I)}(x) \in P_{k-1}(\omega)$.
Moreover, $p_{(i;I)}$ passes to a homomorphism from $QP_k(\omega)$ to $QP_{k-1}(\omega)$. In particular, we have

\begin{lem}[See \cite{su3}]\label{bddl} Let $b$ be a positive integer and let $(j;J), (i;I) \in \mathcal N_k$ with $\ell(I) < b$. 

\medskip
{\rm i)} If $(i;I)\subset (j;J)$, then $p_{(j;J)}\phi_{(i;I)}(X^{2^b-1}) = X^{2^b-1}\ \ \text{\rm mod}(P_{k-1}^-(\omega_{(k,b)})).$

{\rm ii)} If $(i;I)\not\subset (j;J)$, then $p_{(j;J)}\phi_{(i;I)}(X^{2^b-1}) \in P_{k-1}^-(\omega_{(k,b)}).$
\end{lem}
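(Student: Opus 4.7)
The plan is to expand $p_{(j;J)}\phi_{(i;I)}(X^{2^b-1})$ explicitly as an $\mathbb F_2$-linear combination of monomials in $P_{k-1}$, identify the unique monomial whose weight vector can equal $\omega_{(k,b)}$, and compute its coefficient modulo $2$. Because every variable in $X^{2^b-1}$ has exponent $2^b-1$ and $r=\ell(I)<b$, a direct check of the compatibility conditions shows that $X^{2^b-1}$ is $1$-compatible with $(i;I)$, so by Definition~\ref{dfn1},
\[
\phi_{(i;I)}(X^{2^b-1}) = x_i^{2^r-1}\prod_{t=1}^{r}x_{i_t}^{2^b-1-2^{r-t}}\prod_{l\in\mathbb N_k\setminus(\{i\}\cup I)}x_l^{2^b-1}.
\]
Applying the algebra homomorphism $p_{(j;J)}$, which sends $x_j \mapsto \sum_{s\in J}x_{s-1}$, and using the Frobenius identity $(\sum_{s\in J}x_{s-1})^{2^e} = \sum_{s\in J}x_{s-1}^{2^e}$, we get
\[
p_{(j;J)}\phi_{(i;I)}(X^{2^b-1}) \;=\; M'\cdot\sum_{\sigma\colon E_j\to J}\prod_{e\in E_j}x_{\sigma(e)-1}^{2^e},
\]
where $a_j$ is the exponent of $x_j$ in $\phi_{(i;I)}(X^{2^b-1})$, $E_j=\{e:\alpha_e(a_j)=1\}\subset\{0,\ldots,b-1\}$, and $M'=\prod_{l\neq j}x_{m(l)}^{a_l}$ with $m(l)=l$ for $l<j$ and $m(l)=l-1$ for $l>j$.

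By Lemma~\ref{bdm}, every monomial produced lies in $P_{k-1}(\omega_{(k,b)})$. Since $P_{k-1}$ has only $k-1$ variables, a monomial of degree $(k-1)(2^b-1)$ can attain weight vector $\omega_{(k,b)}=((k-1)^{(b)})$ only if each variable's exponent has the full bit pattern $0,1,\ldots,b-1$, i.e.\ equals $2^b-1$; hence the only candidate is $X^{2^b-1}$ itself. Both parts of the lemma therefore reduce to computing the coefficient $c\in\mathbb F_2$ of $X^{2^b-1}$ in the expansion above.

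To compute $c$, for each $m\in\{1,\ldots,k-1\}$ I equate the total exponent of $x_m$ in a monomial $M'\cdot\prod_{e\in E_j}x_{\sigma(e)-1}^{2^e}$ to $2^b-1$. For $m<j$ the exponent is $a_m$, which equals $2^b-1$ precisely when $m\notin\{i\}\cup I$. For $m\geqslant j$, setting $l=m+1$, the equation $a_l+\sum_{e:\,\sigma(e)=l}2^e=2^b-1$ splits into three subcases depending on whether $l\notin\{i\}\cup I$, $l=i$, or $l=i_t\in I$, each imposing a precise condition on the $\sigma$-preimage of $l$. Assembling these conditions shows that a valid $\sigma$ exists if and only if $\{i\}\cup I\subset\{j\}\cup J$, i.e.\ $(i;I)\subset(j;J)$, and when it exists it is unique: $\sigma(r-t)=i_t$ for $1\leqslant t\leqslant r$, supplemented by $\sigma(e)=i$ for $e\in\{r,\ldots,b-1\}$ in the subcase $i>j$. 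Thus $c=1$ in case (i) and $c=0$ in case (ii), establishing both parts.

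The hard part will be the final case analysis. One needs to treat separately the three mutually exclusive positions of $j$ relative to $\{i\}\cup I$ (namely $j=i$, $j\in I$, and $j\notin\{i\}\cup I$), which yield different $a_j$ and $E_j$, and verify in each that the system determining $\sigma$ is simultaneously solvable precisely when $(i;I)\subset(j;J)$. The subcase $i>j$ with $j\notin\{i\}\cup I$ is the most delicate, since then an entire block $\{r,\ldots,b-1\}\subset E_j$ must be sent by $\sigma$ to the single variable $x_{i-1}$.
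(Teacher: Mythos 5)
Your argument is correct. Note that the paper does not prove this lemma itself (it is quoted from \cite{su3}), and your direct computation --- expand $\phi_{(i;I)}(X^{2^b-1})$ via $1$-compatibility, push through $p_{(j;J)}$ using the Frobenius splitting of $(\sum_{s\in J}x_{s-1})^{a_j}$, observe via Lemma \ref{bdm} that only the coefficient of $X^{2^b-1}$ survives modulo $P_{k-1}^-(\omega_{(k,b)})$, and check that exactly one assignment $\sigma$ produces $X^{2^b-1}$ precisely when $\{i\}\cup I\subseteq\{j\}\cup J$ --- is essentially the same verification as in the cited source; I checked the case analysis you outline ($i=j$ forcing $\sigma(r-t)=i_t$ with $E_j=\{0,\dots,r-1\}$; $i>j$ adding $\sigma^{-1}(i)=\{r,\dots,b-1\}$; $i<j$ or $j\in I$ killing the coefficient via the exponent of $x_i$) and it goes through. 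The only thing worth making explicit is that $(i;I)\subset(j;J)$ here means $\{i\}\cup I\subseteq\{j\}\cup J$, which is the convention your computation delivers.
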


For $0<h\leqslant k$, set $\mathcal N_{k,h} = \{(i;I) \in \mathcal N_k: \ell(I)<h\}$. Then, $|\mathcal N_{k,h}| = \sum_{t=1}^h\binom kt$. 

\begin{prop}\label{mdcm1} Let $d$ be a positive integer and let $p=\min\{k,d\}$.  Then, the set 
$$B(d) :=\big\{\big[\phi_{(i;I)}(X^{2^d-1})\big]_{\omega_{(k,d)}} : (i;I) \in \mathcal N_{k,p}\big\}$$ 
is a basis of the $\mathbb F_2$-vector space $QP_k(\omega_{(k,d)})$. Consequently $\dim QP_k(\omega_{(k,d)}) = \sum_{t=1}^p\binom kt.$
\end{prop}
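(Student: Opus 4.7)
The plan is to establish both spanning and linear independence for the set $B(d)$, then read off the dimension as $|\mathcal N_{k,p}|$.

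For spanning, I start from an arbitrary monomial $y \in P_k(\omega_{(k,d)})$ and may discard the case $\omega(y) < \omega_{(k,d)}$ since then $y \in P_k^-(\omega_{(k,d)})$ contributes $0$. Thus we may assume $\omega(y) = \omega_{(k,d)} = ((k-1)^{(d)})$, which forces each $\mathbb J_t(y)$ to be a singleton $\{j_t\}$ for $0 \leqslant t < d$, giving $y = \prod_{t=0}^{d-1} X_{j_t}^{2^t}$. Applying Lemma \ref{hq0} with $b = d$, we get $y \equiv_{\omega_{(k,d)}} \phi_{(i;I)}(X^{2^d-1})$ where $i = \min\{j_0,\ldots,j_{d-1}\}$ and $I$ lists the remaining distinct elements. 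Since $|\{i\} \cup I| \leqslant \min\{k,d\} = p$, we have $\ell(I) < p$, so $(i;I) \in \mathcal N_{k,p}$. Hence $B(d)$ spans $QP_k(\omega_{(k,d)})$.

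For linear independence, suppose $\sum_{(i;I) \in \mathcal N_{k,p}} c_{(i;I)}\,[\phi_{(i;I)}(X^{2^d-1})]_{\omega_{(k,d)}} = 0$. For each $(j;J) \in \mathcal N_{k,p}$, Lemma \ref{bdm} says $p_{(j;J)}$ descends to a map $QP_k(\omega_{(k,d)}) \to QP_{k-1}(\omega_{(k,d)})$, and applying it to the relation, Lemma \ref{bddl} (valid since $\ell(I) < p \leqslant d$) collapses it to
\begin{equation*}
\Big(\sum_{(i;I) \subset (j;J)} c_{(i;I)}\Big)\,[X^{2^d-1}]_{\omega_{(k,d)}} = 0 \quad \text{in } QP_{k-1}(\omega_{(k,d)}).
\end{equation*}
Now $X^{2^d-1} = (x_1\cdots x_{k-1})^{2^d-1}$ is a spike in $P_{k-1}$ with weight vector exactly $\omega_{(k,d)}$, so by Lemma \ref{bdbs} it is admissible, hence $[X^{2^d-1}]_{\omega_{(k,d)}}$ is non-zero in $QP_{k-1}(\omega_{(k,d)})$. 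We conclude $\sum_{(i;I) \subset (j;J)} c_{(i;I)} = 0$ in $\mathbb F_2$ for every $(j;J) \in \mathcal N_{k,p}$.

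This is a square linear system whose matrix is unitriangular for the partial order on $\mathcal N_{k,p}$ induced by set-inclusion of $\{i\} \cup I$ in $\{j\} \cup J$. Ordering $\mathcal N_{k,p}$ by increasing $|\{j\} \cup J|$, the base case $J = \emptyset$ gives $c_{(j;\emptyset)} = 0$, and a straightforward induction on $|J|$ forces all $c_{(i;I)} = 0$. The dimension count $|\mathcal N_{k,p}| = \sum_{t=1}^{p}\binom{k}{t}$ follows by grouping $(i;I)$ according to $|\{i\} \cup I| = r+1 \in \{1,\ldots,p\}$. The only place that requires care is verifying the hypotheses of Lemma \ref{bddl} uniformly on $\mathcal N_{k,p}$, which is automatic because $\ell(I), \ell(J) < p \leqslant d$; the nontrivial geometric content is already packaged in Lemmas \ref{hq0} and \ref{bddl}.
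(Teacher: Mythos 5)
Your proof is correct and follows essentially the same route as the paper's: spanning via Lemma \ref{hq0}, and linear independence by applying the maps $p_{(j;J)}$ together with Lemmas \ref{bdm} and \ref{bddl} and an induction on the length of $J$. The only detail you add beyond the paper's brief sketch is the explicit observation that $[X^{2^d-1}]_{\omega_{(k,d)}}$ is nonzero in $QP_{k-1}(\omega_{(k,d)})$ because the spike $X^{2^d-1}$ is admissible (Lemma \ref{bdbs}), which is a point the paper leaves implicit.
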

\begin{proof} Let $x$ be a monomial in $P_k(\omega_{(k,d)})$ and $[x]_{\omega_{(k,d)}} \ne 0$. Then, we have $\omega(x) = \omega_{(k,d)}$. So, there exist $j_0,j_1,\ldots, j_{d-1} \in \mathbb N_k$ such that $x = \prod_{0 \leqslant t < d}X_{j_t}^{2^t}$. According to Lemma \ref{hq0}, there is $(i;I) \in \mathcal N_k$ such that $x = \prod_{0 \leqslant t < d}X_{j_t}^{2^t}\equiv_{\omega_{(k,d)}} \phi_{(i;I)}(X^{2^d-1}),$ where $r = \ell(I) < p = \min\{k,d\}$. Hence, $QP_k(\omega_{(k,d)})$ is spanned by the set $B(d)$. 

Now, we prove that the set $B(d)$ is linearly independent in $QP_k(\omega_{(k,d)})$. Suppose that there is a linear relation
\begin{equation*}\label{ctmdo1}\sum_{(i;I) \in \mathcal N_{k,p}}\gamma_{(i;I)} \phi_{(i;I)}(X^{2^d-1}) \equiv_{\omega_{(k,d)}} 0,\end{equation*}
where $\gamma_{(i;I)} \in \mathbb F_2$. By induction on $\ell(I)$, using Lemma \ref{bdm} and Lemma \ref{bddl} with $b=d$, we can easily show that $\gamma_{(i;I)} =0$ for all $(i;I) \in \mathcal N_{k,p}$. The proposition is proved.
\end{proof}

Set $C_k = \{x_{j_1}x_{j_2}\ldots x_{j_{k-3}}x_j^2: 1\leqslant j_1 < j_2 < \ldots < j_{k-3}<k, \ j_1 \leqslant j <k\} \subset P_{k-1}$.
It is easy to see that $|C_k| = (k-3)\binom{k}2$. 
\begin{lem}\label{bdbbe} $C_k$ is the set of the admissible monomials in $P_{k-1}$ such that their weight vectors are $\bar\omega_{(k,1)}=(k-3,1)$. Consequently, $\dim QP_{k-1}(\bar\omega_{(k,1)}) =  (k-3)\binom{k}2$.
\end{lem}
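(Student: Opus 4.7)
The lemma asserts that $C_k$ is precisely the set of admissible monomials in $P_{k-1}$ of weight $\bar\omega_{(k,1)}$, with $|C_k| = (k-3)\binom{k}{2}$. My plan is to exhibit all weight-$(k-3,1)$ hit relations explicitly and use them to separate admissible from inadmissible monomials.

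\textbf{Spanning.} A weight-$(k-3,1)$ monomial in $P_{k-1}$ has the shape $M = x_j^2 \prod_{i \in S} x_i$ with $|S| = k-3$ and $S \cup \{j\} \subset \{1,\ldots,k-1\}$; the overlap $j \in S$ merges into an $x_j^3$ factor. Membership in $C_k$ is equivalent to $j \geq j_1 := \min S$, which is automatic when $j \in S$. For $M \notin C_k$, so $j \notin S$ and $j < j_1$, I would apply $Sq^1$ to the squarefree degree-$(k-2)$ monomial $x_j \prod_{i \in S} x_i$ and use the Cartan formula to get
\[
Sq^1\Bigl(x_j \prod_{i \in S} x_i\Bigr) = M + \sum_{i \in S} x_i^2 \cdot x_j \prod_{i' \in S \setminus \{i\}} x_{i'}.
\]
Thus $M$ is equivalent modulo $\mathcal{A}^+ P_{k-1}$ to a sum of monomials each having $\sigma$-value $1$ at position $j$ while $\sigma(M)_j = 2$, so each is strictly less than $M$ in left-lex order. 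Hence $M$ is inadmissible and $C_k$ spans $QP_{k-1}(\bar\omega_{(k,1)})$.

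\textbf{Admissibility.} Each $M \in C_k$ with $j \in S$ is a spike (exponents lie in $\{1, 3\} = \{2^1-1, 2^2-1\}$), hence admissible by Lemma~\ref{bdbs}(i). For the remaining $M \in C_k$ with $j \notin S$ and $j > j_1$, the key point is that the only weight-$(k-3,1)$ hit relations come from $Sq^1$ applied to the $k-1$ squarefree degree-$(k-2)$ monomials:
\[
R_{t_0}:\ \sum_{t \in \{1,\ldots,k-1\} \setminus \{t_0\}} x_t^2 \prod_{i \in \{1,\ldots,k-1\} \setminus \{t, t_0\}} x_i \equiv 0, \qquad t_0 \in \{1,\ldots,k-1\}.
\]
A weight-vector tracking shows every other $Sq^{2^i}$ (for $i \geq 1$) and every $Sq^1$ on a non-squarefree polynomial of degree $k-2$ lands in $P_{k-1}^-(\bar\omega_{(k,1)})$; the monomial supports of distinct $R_{t_0}$ are pairwise disjoint, so the $k-1$ relations are linearly independent. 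The monomial $M$ belongs to exactly one $R_{t_0^*}$, where $t_0^*$ is the unique element of $\{1,\ldots,k-1\} \setminus (S \cup \{j\})$, and comparing $\sigma$-vectors shows that the $\leq$-maximum term of $R_{t_0^*}$ is the sole non-$C_k$ monomial in its support, namely the one with squared index $\min(S \cup \{j\}) = j_1 < j$. If $M \equiv \sum y_t$ with $y_t < M$ of weight $(k-3,1)$, then modulo $P_{k-1}^-(\bar\omega_{(k,1)})$ one has $M + \sum y_t = \sum c_{t_0} R_{t_0}$; the non-$C_k$ maximum of $R_{t_0^*}$ is strictly larger than $M$ and hence absent from the left side, forcing $c_{t_0^*} = 0$, and then the coefficient of $M$ on the right side is $0$ while that on the left is $1$, a contradiction. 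Hence $M$ is admissible. The count $|C_k| = \sum_{j_1=1}^{k-1}(k-j_1)\binom{k-1-j_1}{k-4} = (k-3)\binom{k}{2}$ follows from a standard binomial identity.

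\textbf{Main obstacle.} The delicate step is the weight-vector verification that the $R_{t_0}$ exhaust the weight-$(k-3,1)$ hit relations: for each $i \geq 0$ and each monomial $g$ of degree $k - 1 - 2^i$, one must check that every monomial appearing in $Sq^{2^i}(g)$ has weight strictly below $(k-3,1)$ unless $i = 0$ and $g$ is squarefree of degree $k-2$. Once this is established, the rest of the admissibility argument reduces to the one-line coefficient comparison above.
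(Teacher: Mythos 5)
Your proof is correct, and it shares its essential input with the paper's --- namely the Cartan-formula fact that a monomial of weight $(k-3,1)$ can occur as a term of $Sq^{2^u}(g)$ only when $u=0$ and $g$ is the squarefree monomial of degree $k-2$ obtained by halving the squared variable --- but the two arguments package that fact differently. The spanning/inadmissibility half is identical to the paper's. For admissibility of a non-spike $M\in C_k$, the paper argues locally: assuming $M=z$ is inadmissible it deduces that $\tilde y=z/x_j$ must be a term of $g_0$, then replays the same deduction on $\bar y=x_{j_1}^2x_{j_2}\cdots x_{j_{k-3}}x_j$ (the unique non-$C_k$ term of $Sq^1(\tilde y)$, which dominates $z$) to reach the contradiction that $\tilde y$ is a term of $g_0+\tilde y$. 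You instead compute the entire space of relations modulo $P_{k-1}^-(\bar\omega_{(k,1)})$ as the span of the $k-1$ elements $R_{t_0}$ and finish with a leading-term argument, using that each $R_{t_0}$ has disjoint support and that its unique non-$C_k$ term (the one with the minimal index squared) is its $<$-maximum. This is cleaner, yields the relation space explicitly (hence the dimension count directly), and avoids the paper's slightly delicate ``term of $g_0$ versus term of $g_0+\tilde y$'' bookkeeping; the price is that you must verify the exhaustion claim for all $Sq^{2^u}$ with $u\geqslant 1$, not just for the one target monomial. You flag that verification rather than carry it out, but it is the routine check that a weight-$(k-3,1)$ monomial has all exponents in $\{0,1,2,3\}$ with at most one exceeding $1$, so a term $\prod_i x_i^{a_i+e_i}$ with $\sum_i e_i=2^u\geqslant 2$ forces either some $e_i\in\{1,2\}$ with $\binom{a_i}{e_i}=0$ or a single $e_i\leqslant 3<2^u$; and the paper needs the $u=0$ and $u\geqslant 1$ instances of the same check for its specific $z$ anyway. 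One sentence worth adding to your write-up: in reducing $M\equiv\sum y_t$ to an identity modulo $P_{k-1}^-(\bar\omega_{(k,1)})$, note that no $y_t$ is squarefree (since $y_t<M$ forces $\omega(y_t)\leqslant\omega(M)$) and the $y_t$ of strictly smaller weight are absorbed into $P_{k-1}^-(\bar\omega_{(k,1)})$, so the left-hand side is genuinely supported on weight-$(k-3,1)$ monomials that are $\leqslant M$.
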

\begin{proof} Let $z$ be a monomial in $P_{k-1}$ such that $\omega(z) = (k-3,1)$. Then, $z = x_{j_1}x_{j_2}\ldots x_{j_{k-3}}x_j^2$ with $1\leqslant j_1 < j_2 < \ldots < j_{k-3}<k$ and $1 \leqslant j <k$. If $z \not\in C_k$, then $j < j_1$. Then, we have
$$ z = \sum_{s=1}^{k-3}x_{j_s}^2x_{j_1}x_{j_2}\ldots \hat x_{j_s}\ldots x_{j_{k-3}}x_j +Sq^1(x_{j_1}x_{j_2}\ldots x_{j_{k-3}}x_j).$$
Since $x_{j_s}^2x_{j_1}x_{j_2}\ldots \hat x_{j_s}\ldots x_{j_{k-3}}x_j <z$ for $1 \leqslant s \leqslant k-3$, $z$ is inadmissible.

Suppose that $z \in C_k$. If there is an index $s$ such that $j = j_s$, then $z$ is a spike. Hence, by Lemma \ref{bdbs}, it is admissible. Assume that $j \ne j_s$ for all $s$. If $z$ is inadmissible, then there exist monomials $y_1,\ldots, y_m$ in $P_{k-1}$ such that $y_t < z$ for all $t$ and
$z =\sum_{t=1}^m y_t + \sum_{u\geqslant 0} Sq^{2^u}(g_u)$, where $g_u$ are suitable polynomials in $P_{k-1}$. Since $y_t < z$ for all $t$, $z$ is a term of $\sum_{u\geqslant 0} Sq^{2^u}(g_u)$, (recall that a monomial $x$ in $P_k$ is called {\it a term} of a polynomial $f$ if it appears in the expression of $f$ in terms of the monomial basis of $P_k$.) Based on the Cartan formula, we see that $z$ is not a term of $Sq^{2^u}(g_u)$ for all $u>0$. If $z$ is a term of $Sq^{1}(y)$ with $y$ a monomial in $P_{k-1}$, then $y = x_{j_1}x_{j_2}\ldots x_{j_{k-3}}x_j := \tilde y$. So, $\tilde y$ is a term of $g_0$. Then, we have
\begin{align*}\bar y := x_{j_1}^2x_{j_2}\ldots x_{j_{k-3}}x_j = \sum_{s=2}^{k-3}&x_{j_s}^2x_{j_1}x_{j_2}\ldots \hat x_{j_s}\ldots x_{j_{k-3}}x_j\\ &+\sum_{t=1}^m y_t + Sq^1(g_0+\tilde y) + \sum_{u\geqslant 1} Sq^{2^u}(g_u).
\end{align*}
Since $j_1 < j$, we have $y_t < z < \bar y$ for all $t$. Hence, $\bar y$ is a term of $Sq^1(g_0+\tilde y) + \sum_{u\geqslant 1} Sq^{2^u}(g_u)$. By an argument analogous to the previous one, we see that $\tilde y$ is a term of $g_0+\tilde y$. This contradicts the fact that $\tilde y$ is a term of $g_0$. The lemma is proved.
\end{proof}
\begin{prop}\label{mdcm2} Let $d$ be a positive integer and let $q =\min\{k,d-1\}$. Then, the set
$$\bar B(d):=\bigcup_{z \in C_k} \big\{\big[\phi_{(i;I)}(X^{2^{d-1}-1}z^{2^{d-1}})\big]_{\bar\omega_{(k,d)}} :  \ (i;I) \in \mathcal N_{k,q}\big\}$$
is linearly independent in 
$QP_k(\bar\omega_{(k,d)})$. If  $d >k$, then $\bar B(d)$  is a basis of  $QP_k(\bar\omega_{(k,d)})$. 
Consequently $\dim QP_k(\bar\omega_{(k,d)}) \geqslant (k-3)\binom k2\sum_{u=1}^q\binom ku$ with equality if $d>k$.
\end{prop}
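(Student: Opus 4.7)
The plan is to parallel the argument of Proposition \ref{mdcm1}, replacing the single spike $X^{2^d-1}$ by the family $\{X^{2^{d-1}-1}z^{2^{d-1}}:z\in C_k\}$ and invoking Lemma \ref{bdbbe} in place of the uniqueness of the minimal spike. The first order of business is to establish an analogue of Lemma \ref{bddl}: for $(i;I),(j;J)\in \mathcal N_{k,q}$ and $z\in C_k$,
$$p_{(j;J)}\phi_{(i;I)}\bigl(X^{2^{d-1}-1}z^{2^{d-1}}\bigr)\equiv X^{2^{d-1}-1}z^{2^{d-1}}\pmod{P_{k-1}^-(\bar\omega_{(k,d)})}$$
when $(i;I)\subset (j;J)$, and the image lies in $P_{k-1}^-(\bar\omega_{(k,d)})$ otherwise. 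The derivation mirrors that of Lemma \ref{bddl} once one verifies that $X^{2^{d-1}-1}z^{2^{d-1}}$ is $1$-compatible with $(i;I)$ (each $\nu_\ell$ contains the dyadic digits $2^0,\ldots,2^{d-2}$) and that the factor $z^{2^{d-1}}$, whose nonzero digits sit at positions $\geqslant d-1$, does not interfere with the low-position bookkeeping of Lemma \ref{bddl}.

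For linear independence, I would start from a relation $\sum_{(i;I),z}\gamma_{(i;I),z}\phi_{(i;I)}(X^{2^{d-1}-1}z^{2^{d-1}})\equiv_{\bar\omega_{(k,d)}} 0$ and apply each $p_{(j;J)}$ with $(j;J)\in\mathcal N_{k,q}$. Downward induction on $\ell(J)$, using the compatibility above, reduces matters to showing that $\{[X^{2^{d-1}-1}z^{2^{d-1}}]_{\bar\omega_{(k,d)}}:z\in C_k\}$ is linearly independent in $QP_{k-1}(\bar\omega_{(k,d)})$. This follows from Lemma \ref{bdbbe} together with the injectivity of the $(d-1)$-fold iterate of Kameko's squaring map $[z]_{\bar\omega_{(k,1)}}\mapsto [X^{2^{d-1}-1}z^{2^{d-1}}]_{\bar\omega_{(k,d)}}$ from $QP_{k-1}(\bar\omega_{(k,1)})$ into $QP_{k-1}(\bar\omega_{(k,d)})$.

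For the spanning when $d>k$ (so $q=k$ and $\mathcal N_{k,q}=\mathcal N_k$), any monomial $y\in P_k(\bar\omega_{(k,d)})$ admits a canonical factorisation $y=y_0\cdot w^{2^{d-1}}$ with $y_0=\prod_{t=0}^{d-2}X_{j_t}^{2^t}\in P_k(\omega_{(k,d-1)})$ and $w=X_{\mathbb J_{d-1}(y)}X_{\mathbb J_d(y)}^2$ of weight $(k-3,1)$. Lemma \ref{hq0} supplies $(i;I)\in\mathcal N_k$ with $y_0\equiv_{\omega_{(k,d-1)}}\phi_{(i;I)}(X^{2^{d-1}-1})$ and $i=\min\{j_0,\ldots,j_{d-2}\}$; after possibly passing to a $\bar\omega_{(k,d)}$-equivalent representative, one may assume $w=f_i(z)$ for a monomial $z\in P_{k-1}$ of weight $(k-3,1)$. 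Applying Lemma \ref{bdbbe} to $z$ and using the identity $\phi_{(i;I)}(X^{2^{d-1}-1})\cdot f_i(z)^{2^{d-1}}=\phi_{(i;I)}(X^{2^{d-1}-1}z^{2^{d-1}})$ yields the required presentation of $[y]_{\bar\omega_{(k,d)}}$ as a combination of classes in $\bar B(d)$.

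The principal difficulty is the compatibility lemma of the opening paragraph: one must track how the extra factor $z^{2^{d-1}}$ propagates through $p_{(j;J)}$ and verify that all cross-terms produced by $p_{(j;J)}$ acting on $x_i^{2^r-1}$ drop into $P_{k-1}^-(\bar\omega_{(k,d)})$. A secondary subtlety, in the spanning step, is arranging $w\in f_i(P_{k-1})$, which may fail for the initial representative of $y$ and requires a preliminary reduction within the $\bar\omega_{(k,d)}$-equivalence class before the identification with $\phi_{(i;I)}(X^{2^{d-1}-1}z^{2^{d-1}})$ can be made.
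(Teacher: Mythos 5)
Your strategy for the linear independence is essentially the paper's: factor $\phi_{(i;I)}(X^{2^{d-1}-1}z^{2^{d-1}})=\phi_{(i;I)}(X^{2^{d-1}-1})f_i(z)^{2^{d-1}}$ via $1$-compatibility, hit the relation with the projections $p_{(j;J)}$, and reduce to the independence of the classes $[X^{2^{d-1}-1}z^{2^{d-1}}]_{\bar\omega_{(k,d)}}$ in $QP_{k-1}(\bar\omega_{(k,d)})$, which holds because $z$ admissible forces $X^{2^{d-1}-1}z^{2^{d-1}}$ admissible (your formulation via injectivity of the iterated Kameko squaring is the same fact). Two remarks. First, you do not need to re-prove an analogue of Lemma \ref{bddl}: the paper applies Lemma \ref{bddl} verbatim with $b=d-1$ and adds only the observation that $g\in P_{k-1}^-(\omega_{(k,d-1)})$ implies $gz^{2^{d-1}}\in P_{k-1}^-(\bar\omega_{(k,d)})$; that single remark is exactly the ``non-interference'' you flag as the principal difficulty, so this step is cheaper than you anticipate. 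Second --- and this is the one step that would fail as written --- the induction on $\ell(J)$ must run \emph{upward}, not downward. Applying $p_{(j;J)}$ to the relation retains every term with $(i;I)\subset(j;J)$, so if you start at maximal length you only obtain $\sum_{(i;I)\subset(j;J)}\gamma_{(i;I),z}=0$ for each $z$, which does not isolate any single coefficient. Starting at $\ell(J)=0$, where $(i;I)\subset(j;\emptyset)$ forces $(i;I)=(j;\emptyset)$, and working upward (at stage $m$ all shorter pairs have already been killed, and the only pair of length $m$ contained in $(j;J)$ is $(j;J)$ itself) extracts the coefficients one at a time; this is how the paper argues.

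For the spanning statement when $d>k$, your sketch --- factor $y=y_0w^{2^{d-1}}$ with $\omega(w)=(k-3,1)$, reduce $y_0$ by Lemma \ref{hq0}, then arrange $w\in f_i(P_{k-1})$ and invoke Lemma \ref{bdbbe} to push $z$ into $C_k$ --- has the right shape, but the reduction you acknowledge as a ``secondary subtlety'' is in fact the entire remaining content of that half. The paper does not carry it out either: it refers to the proof of Proposition 3.3 of \cite{su3}, with $\equiv$ replaced by $\equiv_{\bar\omega_{(k,d)}}$. So your account matches the paper's level of detail there, but neither is self-contained on that point.
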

\begin{proof} We prove the first part of the proposition. Suppose there is a linear relation
\begin{equation*}\label{ctmdo2}\mathcal S:= \sum_{((i;I),z) \in \mathcal N_{k,q}\times C_k}\gamma_{(i;I),z} \phi_{(i;I)}(X^{2^{d-1}-1}z^{2^{d-1}}) \equiv_{\bar\omega_{(k,d)}} 0,\end{equation*}
where $\gamma_{(i;I),z} \in \mathbb F_2$. We prove $\gamma_{(j;J),z} = 0$ for all $(j;J) \in \mathcal N_{k,q}$ and $z \in C_k$. The proof proceeds by induction on $m=\ell(J)$. Let $(i;I) \in \mathcal N_{k,q}$. Since $r=\ell (I) < q =\min\{k,d-1\}$, $X^{2^{d-1}-1}z^{2^{d-1}}$ is 1-compatible with $(i;I)$ and $x_i^{2^r-1}f_i(X^{2^{d-1}-1})$ is divisible by $x_{(I,1)}$. Hence, using Definition \ref{dfn1}, we easily obtain
\begin{align*}\phi_{(i;I)}(X^{2^{d-1}-1}z^{2^{d-1}}) 
=\phi_{(i;I)}(X^{2^{d-1}-1})f_i(z^{2^{d-1}}). 
\end{align*} 
A simple computation show that if $g \in P_{k-1}^-(\omega_{(k,d-1)})$, then $gz^{2^{d-1}} \in P_{k-1}^-(\bar\omega_{(k,d)})$; if $(i;I) \subset (j;\emptyset)$, then $(i;I) = (j;\emptyset)$; by Lemma \ref{bdm}, $p_{(j;\emptyset)}(\mathcal S) \equiv_{\bar\omega_{(k,d)}} 0$. Hence, applying Lemma  \ref{bddl} with $b = d-1$, we get 
$$p_{(j,\emptyset)}(\mathcal S) \equiv_{\bar\omega_{(k,d)}} \sum_{z\in C_k}\gamma_{(j;\emptyset),z}X^{2^{d-1}-1}z^{2^{d-1}}\equiv_{\bar\omega_{(k,d)}} 0.$$ 
Since $z$ is admissible in $P_{k-1}$, $X^{2^{d-1}-1}z^{2^{d-1}}$ is also admissible in $P_{k-1}$. Hence, the last relation implies $\gamma_{(j;\emptyset),z} = 0$ for all $z \in C_k$. Suppose $0 < m < q$ and $\gamma_{(i;I),z} = 0$ for all $z\in C_k$ and $(i;I) \in \mathcal N_{k,q}$ with $\ell(I) < m$. Let $(j;J) \in \mathcal N_{k,q}$ with $\ell(J) =m$. Note that by Lemma \ref{bdm}, $p_{(j;J)}(\mathcal S) \equiv_{\bar\omega_{(k,d)}} 0$; if $(i;I) \in \mathcal N_{k,q}$, $\ell(I) \geqslant m$ and $(i;I)\subset (j;J)$, then $(i;I) = (j;J)$. So, using Lemma \ref{bddl} with $b=d-1$ and the inductive hypothesis, we obtain
$$p_{(j,J)}(\mathcal S) \equiv_{\bar\omega_{(k,d)}} \sum_{z\in C_k}\gamma_{(j;J),z}X^{2^{d-1}-1}z^{2^{d-1}}\equiv_{\bar\omega_{(k,d)}} 0.$$
From this equality, one gets $\gamma_{(j;J),z} = 0$ for all $z \in C_k$. The first part of the proposition follows.

The proof of the  second part is similar to the one of Proposition 3.3 in \cite{su3}. However, the relation $\equiv_{\bar\omega_{(k,d)}}$ is used in the proof instead of $\equiv$.
\end{proof}
For $k=5$, we have the following result.
\begin{thm}\label{dl12a} Let $n= 4(2^d-1)$ with $d$ a positive integer. The dimension of the $\mathbb F_2$-vector space $(QP_5)_{n}$ is determined by the following table:

\medskip
\centerline{\begin{tabular}{c|ccccc}
$n = 4(2^d-1)$&$d=1$ & $d=2$ & $d=3$&$d=4$  & $d\geqslant 5$\cr
\hline
\ $\dim(QP_5)_n$ & $45$ & $190$ & $480$ &$650$& $651$ \cr
\end{tabular}}
\end{thm}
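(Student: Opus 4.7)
My plan is to split the proof at $d=5$. For $d\geqslant 5$ the result follows in one step from Theorem~\ref{dl1}; for $d\in\{1,2,3,4\}$ an explicit case-by-case analysis is required, and this carries the entire technical weight.

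For the stable range $d\geqslant 5$, I would write
\[
n = 4(2^d-1) = (2^{d+1}-1) + (2^d-1) + (2^{d-1}-1) + (2^{d-1}-1),
\]
which matches the hypothesis of Theorem~\ref{dl1} with $d_1=d+1>d_2=d>d_3=d-1\geqslant d_4=d-1$, so that the required inequality $d_{k-1}\geqslant k-1$ becomes exactly $d\geqslant 5$. The theorem then yields
\[
\dim(QP_5)_n = 31\cdot\dim(QP_4)_m,\qquad m=(2^2-1)+(2-1)+(2^0-1)=4.
\]
It suffices then to compute $\dim(QP_4)_4=21$. The minimal spike in $P_4$ of degree $4$ is $x_1^3x_2$ with weight $(2,1)$, so by Theorem~\ref{dlsig} only $\omega\in\{(4),(2,1)\}$ contribute. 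A direct examination gives $\dim QP_4((4))=1$ (represented by $x_1x_2x_3x_4$) and $\dim QP_4((2,1))=20$: the twelve spikes $x_i^3x_j$ are admissible, while the twelve monomials $x_i^2x_jx_l$ reduce to $12-4=8$ independent classes modulo the four relations $Sq^1(x_ix_jx_l)$, one per $3$-subset of $\{1,2,3,4\}$. Multiplication gives $31\cdot 21 = 651$, matching the $d\geqslant 5$ column.

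For $d\in\{1,2,3,4\}$ I would use the decomposition $(QP_5)_n=\bigoplus_\omega QP_5(\omega)$. Enumerating integer solutions of $\sum 2^{i-1}\omega_i=4(2^d-1)$ with $\omega_i\leqslant 5$ that are lexicographically at least the weight $\bar\omega_{(5,d)}=(4^{(d-1)},2,1)$ of the minimal spike shows that in every one of the four cases exactly two weight vectors survive Theorem~\ref{dlsig}: the vector $\omega_{(5,d)}=(4^{(d)})$, fully controlled by Proposition~\ref{mdcm1} and contributing $5,15,25,30$ respectively, and the vector $\bar\omega_{(5,d)}$, for which Proposition~\ref{mdcm2} only yields a lower bound. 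The target values are
\[
\dim QP_5(\bar\omega_{(5,d)}) = 40,\ 175,\ 455,\ 620\qquad(d=1,2,3,4),
\]
and summing with the $\omega_{(5,d)}$-contributions produces $45,190,480,650$ as claimed.

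The main obstacle is establishing these four dimensions exactly, because Proposition~\ref{mdcm2} gives only $0,100,300,500$, leaving gaps of $40,75,155,120$ to be filled. I would address the lower bound by enlarging the set $\bar B(d)$ of Proposition~\ref{mdcm2} with additional $\Sigma_5$-orbits of admissible monomials and verifying linear independence via the projections $p_{(j;J)}$ together with Lemma~\ref{bddl} in the manner of Proposition~\ref{mdcm2}. The matching upper bound would require systematically reducing every remaining monomial of $P_5(\bar\omega_{(5,d)})$ modulo $\mathcal A^+P_5 + P_5^-(\bar\omega_{(5,d)})$ via Cartan-formula-derived hit relations, together with inadmissibility arguments in the spirit of Lemma~\ref{bdbbe}. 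I expect this combinatorial bookkeeping, particularly for $d=3$ where the gap is largest, to occupy the bulk of the argument.
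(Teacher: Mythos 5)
Your treatment of the stable range $d \geqslant 5$ is correct and is exactly the paper's route: the decomposition $4(2^d-1) = (2^{d+1}-1)+(2^d-1)+2(2^{d-1}-1)$ puts you in the hypotheses of Theorem \ref{dl1} with $d_4 = d-1 \geqslant 4 = k-1$, and your direct verification that $\dim(QP_4)_4 = 21$ (one class of weight $(4)$ plus twenty of weight $(2,1)$) correctly supplies the input that the paper simply cites from \cite{su3}; then $31\cdot 21 = 651$. Your reduction for $d \leqslant 4$ is also sound as far as it goes: the minimal spike of degree $4(2^d-1)$ has weight vector $\bar\omega_{(5,d)} = (4^{(d-1)},2,1)$, and an easy induction on $d$ shows that the only weight vectors of that degree not lying below it are $\omega_{(5,d)}$ and $\bar\omega_{(5,d)}$ themselves, so Theorem \ref{dlsig} and the decomposition $(QP_5)_n \cong \bigoplus_{\deg\omega = n} QP_5(\omega)$ reduce everything to the four numbers $\dim QP_5(\bar\omega_{(5,d)}) = 40, 175, 455, 620$.

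But those four numbers \emph{are} the theorem, and they are asserted rather than proved. Except for $d=1$, where the count is elementary ($20$ spikes $x_i^3x_j$, plus $30$ monomials $x_l^2x_ix_j$ modulo the $10$ relations $Sq^1(x_ix_jx_l)$, giving $40$), you give no construction of the extra independent classes beyond the $100, 300, 500$ of Proposition \ref{mdcm2} and no mechanism for the matching upper bounds --- only the statement that you would ``enlarge $\bar B(d)$'' and ``systematically reduce every remaining monomial.'' For $\bar\omega_{(5,3)}$, say, this means exhibiting $455$ independent classes among the monomials of $P_5$ of weight $(4,4,2,1)$ and showing every other such monomial is decomposable modulo $\mathcal A^+P_5 + P_5^-(\bar\omega_{(5,3)})$; that is a genuinely large case-by-case computation, not a routine extension of Proposition \ref{mdcm2}, whose projections $p_{(j;J)}$ only detect the classes built from a single admissible $z \in C_5$. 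To be fair, the paper is in the same position: its proof of this theorem for $1 \leqslant d \leqslant 4$ consists of the remark that the argument ``is long and very technical'' and ``will be published elsewhere.'' So your proposal reproduces the published argument for $d \geqslant 5$ and, for $d \leqslant 4$, correctly frames but does not carry out the computation that constitutes the actual content of the theorem.
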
 
Since $n= 4(2^d-1) = 2^{d+1} + 2^d + 2^{d-1} + 2^{d-1} -4$, for $d\geqslant 5$, the theorem follows from Theorem ~ \ref{dl1} and a result in \cite{su3}. For $1 \leqslant d \leqslant 4$, the proof of this theorem is based on Theorem ~ \ref{dlsig} and some results of Kameko \cite{ka}. It is long and very technical.  
The detailed proof of it will be published elsewhere.

\begin{proof}[Proof of Main Theorem] For $k=3$, the theorem follows from the results of Kameko \cite{ka}. For $k=4$, it follows from the results in \cite{su1,su3}. Theorem \ref{dl12a} implies immediately this theorem for $k=5$.

Suppose $k \geqslant 6$.  Lemma \ref{bdbbe} implies that $QP_k(\bar\omega_{(k,1)}) \ne 0$. Hence, 
\begin{align*}\dim (QP_k)_{k-1} &\geqslant  \dim QP_k(\omega_{(k,1)}) +\dim QP_k(\bar\omega_{(k,1)})\\
&> \dim QP_k(\omega_{(k,1)})= k= c(k,1).
\end{align*}
So, the theorem holds for $d=1$. 

Now, let $d>1$ and  $\widetilde\omega_{(k,d)} = ((k-1)^{(d-2)}, k-3,k-4,2)$. Since $\widetilde\omega_{(k,d)}$ is weakly decreasing, by Lemma \ref{bdbs},  $QP_k(\widetilde\omega_{(k,d)}) \ne 0$. We have $\deg(\omega_{(k,d)}) = \deg(\bar\omega_{(k,d)}) = \deg (\widetilde\omega_{(k,d)}) = (k-1)(2^d-1) = n$ and $(QP_k)_n \cong \bigoplus_{\deg \omega = n}QP_k(\omega).$ Hence,  using Propositions \ref{mdcm1} and \ref{mdcm2}, we get
\begin{align*}\dim(QP_k)_n &= \sum_{\deg \omega = n}\dim QP_k(\omega) \\
&\geqslant \dim QP_k(\omega_{(k,d)}) + \dim QP_k(\bar\omega_{(k,d)}) + \dim QP_k(\widetilde\omega_{(k,d)})\\
&> \dim QP_k(\omega_{(k,d)}) + \dim QP_k(\bar\omega_{(k,d)}) \geqslant c(k,d).
\end{align*}
The theorem is proved. 
\end{proof}
 Denote by $N(k,n)$ the number of spikes of degree $n$ in $P_k$.  Note that if $(i;I) \in \mathcal N_k$ and $I \ne \emptyset$, then $\phi_{(i;I)}(x)$ is not a spike for any monomial $x$. Hence, using Propositions \ref{mdcm1} and \ref{mdcm2}, we easily obtain the following.
\begin{corl} Under the hypotheses of Main Theorem,
\begin{align*}\dim(QP_k)_n \geqslant N(k,n) +\sum_{t=2}^p\binom kt + (k-3)\binom{k}2\sum_{u=2}^{q}\binom ku.
\end{align*} 
\end{corl}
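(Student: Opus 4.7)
The plan is to exploit the direct sum decomposition $(QP_k)_n\cong\bigoplus_{\deg\omega=n}QP_k(\omega)$ and assemble three families of linearly independent classes: all $N(k,n)$ spikes of degree $n$, the $I\ne\emptyset$ part of the basis $B(d)$ from Proposition \ref{mdcm1}, and the $I\ne\emptyset$ part of the independent set $\bar B(d)$ from Proposition \ref{mdcm2}. The observation preceding the corollary, namely that $\phi_{(i;I)}(x)$ is never a spike when $I\ne\emptyset$, keeps the latter two families disjoint from the spike family within their respective weight summands.

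First I would recall that by Lemma \ref{bdbs}(i) every spike of degree $n$ is admissible, so the $N(k,n)$ spikes yield linearly independent classes in $(QP_k)_n$, and each one lives in the single summand $QP_k(\omega(z))$ determined by its weight vector. Inside the summand $QP_k(\omega_{(k,d)})$ the spikes are precisely the $k$ monomials $X_i^{2^d-1}=\phi_{(i;\emptyset)}(X^{2^d-1})$; together with the $\sum_{t=2}^{p}\binom kt$ non-spike classes $[\phi_{(i;I)}(X^{2^d-1})]_{\omega_{(k,d)}}$ indexed by $I\ne\emptyset$, they exhaust the basis $B(d)$ and are therefore jointly linearly independent.

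For the summand $QP_k(\bar\omega_{(k,d)})$ I would verify that every spike of weight vector $\bar\omega_{(k,d)}$ appears among the $I=\emptyset$ members of $\bar B(d)$. Such a spike has the forced exponent multiset $\{2^{d+1}-1,\,(2^d-1)^{(k-4)},\,(2^{d-1}-1)^{(2)},\,0\}$; the zero exponent pins down the unique $i\in\mathbb N_k$, and recording which variable carries $2^{d+1}-1$ and which two carry $2^{d-1}-1$ singles out a unique $z=x_{j_1}\cdots x_{j_{k-3}}x_{j_s}^2\in C_k$ with $j=j_s$ such that the spike equals $\phi_{(i;\emptyset)}(X^{2^{d-1}-1}z^{2^{d-1}})$; the count $k(k-1)\binom{k-2}{2}=k(k-3)\binom{k-1}{2}$ confirms the bijection. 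Adjoining the $(k-3)\binom k2\sum_{u=2}^{q}\binom ku$ non-spike classes $[\phi_{(i;I)}(X^{2^{d-1}-1}z^{2^{d-1}})]_{\bar\omega_{(k,d)}}$ with $I\ne\emptyset$ embeds the combined set into the linearly independent set $\bar B(d)$, so these spike and non-spike contributions are jointly independent.

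Summing the independent contributions across every weight vector via the direct sum decomposition then yields the claimed inequality, since spikes living in summands other than $QP_k(\omega_{(k,d)})$ and $QP_k(\bar\omega_{(k,d)})$ are automatically independent from the $B(d)$ and $\bar B(d)$ contributions. The most delicate step is the identification of every spike of weight vector $\bar\omega_{(k,d)}$ with a $j=j_s$ element of $\bar B(d)$; without this bijection one could not absorb the $\bar\omega_{(k,d)}$-spikes directly into $\bar B(d)$ and would need a separate admissibility argument to splice the two independent sets together.
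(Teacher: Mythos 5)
Your proposal is correct and follows essentially the same route the paper indicates (the paper only sketches it): decompose $(QP_k)_n$ as $\bigoplus_{\deg\omega=n}QP_k(\omega)$, note that the $I\neq\emptyset$ elements of $B(d)$ and $\bar B(d)$ are never spikes, and check that the spikes of weight $\omega_{(k,d)}$ (resp.\ $\bar\omega_{(k,d)}$) are exactly the $I=\emptyset$ elements of $B(d)$ (resp.\ are contained in the $I=\emptyset$ part of $\bar B(d)$), so that spikes and non-spike classes combine into subsets of the independent sets from Propositions \ref{mdcm1} and \ref{mdcm2}. Your explicit identification of the $\bar\omega_{(k,d)}$-spikes with the $j=j_s$ elements of $\bar B(d)$, including the count $k(k-1)\binom{k-2}{2}=k(k-3)\binom{k-1}{2}$, is the detail the paper leaves to the reader, and it checks out.
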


This corollary implies Mothebe's result. 

\begin{corl}[See Mothebe \cite{mo,mo1}]\label{hqmo} Under the above hypotheses,
$$\dim (QP_k)_n \geqslant N(k,n)  + \sum_{t=2}^p\binom kt .$$
\end{corl}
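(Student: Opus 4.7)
The plan is to read off Mothebe's bound as an immediate consequence of the preceding Corollary, since the stronger estimate proved there differs from the present claim only by an extra summand that is manifestly non-negative under the hypothesis $k\geqslant 3$.

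Concretely, the previous corollary asserts
\begin{equation*}
\dim(QP_k)_n \;\geqslant\; N(k,n) + \sum_{t=2}^{p}\binom{k}{t} + (k-3)\binom{k}{2}\sum_{u=2}^{q}\binom{k}{u}.
\end{equation*}
I would then check that the third summand is non-negative: since $k\geqslant 3$ the factor $(k-3)$ is non-negative and $\binom{k}{2}$ is strictly positive, while $\sum_{u=2}^{q}\binom{k}{u}$ is either an empty sum (which happens precisely when $q\leqslant 1$, i.e.\ when $d=1$) or a sum of positive binomial coefficients. In either case the third term contributes nothing negative, so dropping it yields
$$\dim(QP_k)_n \;\geqslant\; N(k,n) + \sum_{t=2}^{p}\binom{k}{t},$$
which is exactly the statement attributed to Mothebe.

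Since the argument at this final step is a one-line monotonicity observation, there is no real obstacle left. The genuine work sits entirely upstream: Proposition \ref{mdcm1} produces a basis of $QP_k(\omega_{(k,d)})$ indexed by $\mathcal N_{k,p}$, Proposition \ref{mdcm2} furnishes an independent family in $QP_k(\bar\omega_{(k,d)})$ indexed by $\mathcal N_{k,q}\times C_k$, and the remark that $\phi_{(i;I)}(x)$ is never a spike when $I\neq\emptyset$ is precisely what allows the $t=1$ contributions (coming from the monomials $f_i(X^{2^d-1})$, which \emph{are} spikes) to be repackaged and replaced by the sharper spike count $N(k,n)$ in both bounds.
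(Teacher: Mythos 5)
Your argument is correct and is exactly the paper's: the corollary follows from the preceding one by discarding the summand $(k-3)\binom{k}{2}\sum_{u=2}^{q}\binom{k}{u}$, which is non-negative because $k\geqslant 3$. (A trivial quibble only: since $q=\min\{k,d-1\}$, the sum $\sum_{u=2}^{q}\binom{k}{u}$ is empty for all $d\leqslant 2$, not just $d=1$; this does not affect the conclusion.)
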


\medskip
\noindent
{\bf Acknowledgment.}

We would like to express our warmest thanks to the referee for many helpful suggestions and detailed comments, which have led to improvement of the paper's exposition.

The second author was supported in part by the Research Project Grant No. B2013.28.129 and by the Viet Nam Institute for Advanced Study in Mathematics.

\bigskip
{}

\smallskip\noindent
$^{\dag}$\ Department of Mathematics, Quy Nh\ohorn n University, 

\noindent \ \
170 An D\uhorn \ohorn ng V\uhorn \ohorn ng, Quy Nh\ohorn n, B\`inh \DJ\d inh, Viet Nam.

\smallskip\noindent \ \
E-mail: dangphuc150497@gmail.com and nguyensum@qnu.edu.vn

\end{document}